\numberwithin{equation}{section}
\newtheorem{theorem}{Theorem}[section]
\newtheorem{proposition}[theorem]{Proposition}
\newtheorem{corollary}[theorem]{Corollary}
\theoremstyle{definition}
\newtheorem{remark}[theorem]{Remark}
\begin{document}

\title[Pushforward of structure sheaf and virtual global generation]{Pushforward of 
structure sheaf and virtual global generation}

\author[I. Biswas]{Indranil Biswas}

\address{Department of Mathematics, Shiv Nadar University, NH91, Tehsil
Dadri, Greater Noida, Uttar Pradesh 201314, India}

\email{indranil.biswas@snu.edu.in, indranil29@gmail.com}

\author[M. Kumar]{Manish Kumar}

\address{Statistics and Mathematics Unit, Indian Statistical Institute,
Bangalore 560059, India}

\email{manish@isibang.ac.in}

\author[A.J. Parameswaran]{A. J. Parameswaran}

\address{School of Mathematics, Tata Institute of Fundamental
Research, Homi Bhabha Road, Bombay 400005, India}

\email{param@math.tifr.res.in}

\subjclass[2010]{14H30, 14H60}

\keywords{Virtual global generation, genuinely ramified map, ampleness}

\begin{abstract}
Let $f\,:\,X\,\longrightarrow \,Y$ be a generically smooth morphism between
irreducible smooth projective curves over an algebraically closed field of arbitrary characteristic. We prove
that the vector bundle $((f_*{\mathcal O}_X)/{\mathcal O}_Y)^*$ is virtually globally
generated. Moreover, $((f_*{\mathcal O}_X)/{\mathcal O}_Y)^*$ is ample if and only if $f$
is genuinely ramified.
\end{abstract}

\maketitle

\section{Introduction}

Let $X$ and $Y$ be irreducible smooth projective curves over an algebraically closed field
$k$ --- there is no assumption on the characteristic of $k$ --- and
let $f\,:\,X\,\longrightarrow \,Y$ be a generically smooth morphism.
Then we have ${\mathcal O}_Y\, \subset\, f_*{\mathcal O}_X$. In \cite{BP3} it was shown that
the homomorphism of \'etale fundamental groups
$f_*\, :\, \pi^{\rm et}_1(X)\, \longrightarrow\,\pi^{\rm et}_1(Y)$
induced by $f$ is surjective if and only if ${\mathcal O}_Y$ is the unique maximal
semistable subsheaf of $f_*{\mathcal O}_X$. We call $f$ to be genuinely ramified
if ${\mathcal O}_Y$ is the unique maximal semistable subsheaf of $f_*{\mathcal O}_X$.
On the other hand, $f$ is called primitive if the above homomorphism $f_*$ of \'etale fundamental groups
is surjective \cite{CLV}. So $f$ is genuinely ramified if and only if it is primitive.

The main result of \cite{BP3} says the following: If $f\,:\,X\,\longrightarrow \,Y$ is
genuinely ramified, and $E$ is a stable vector bundle on $Y$, then $f^*E$ is also stable.
This was proved by investigating the quotient bundle $(f_*{\mathcal O}_X)/{\mathcal O}_Y$.

The dual vector bundle $((f_*{\mathcal O}_X)/{\mathcal O}_Y)^*$ is called the
Tschirnhausen bundle for $f$ (see \cite{CLV}). The following is the main result of
\cite{CLV}: Let $f\,:\,X\,\longrightarrow \,Y$ be a general primitive degree $r$ cover,
where ${\rm genus}(X)\,=\, g$ and ${\rm genus}(Y)\,=\, h$,
over an algebraically closed field of characteristic zero or greater than $r$. Then
\begin{enumerate}
\item $((f_*{\mathcal O}_X)/{\mathcal O}_Y)^*$ is semistable if $h\,=\, 1$, and

\item $((f_*{\mathcal O}_X)/{\mathcal O}_Y)^*$ is stable
if $h \,\geq \, 2$.
\end{enumerate}
Note that the above mentioned result of \cite{BP3} can be reformulated as follows: Let 
$f\,:\,X\,\longrightarrow \,Y$ be a generically smooth morphism between irreducible smooth projective curves.
Then $f^*E$ is stable for every stable vector bundle $E$ on $Y$ if and only if
$$
\mu_{\rm min}(((f_*{\mathcal O}_X)/{\mathcal O}_Y)^*) \, >\, 0.
$$
(Recall that $\mu_{\rm min}$ denotes the slope of the smallest quotient \cite[p.~16, Definition 1.3.2]{HL}.)
See \cite{CLV} for more on Tschirnhausen bundles.

A vector bundle on an irreducible smooth projective curve $Z$ is called virtually globally generated if its
pullback, under some surjective morphism to $Z$ from some irreducible smooth projective curve, is generated by its
global sections; see Section \ref{se3}.

We prove the following (see Theorem \ref{thm2}):

\textit{Let $X$ and $Y$ be irreducible smooth projective curves and
$$
f\, :\, X\, \longrightarrow\, Y
$$
a generically smooth morphism. Then $(f_*{\mathcal O}_X)^*$ is virtually globally generated.}

Note that this implies that $((f_*{\mathcal O}_X)/{\mathcal O}_Y)^*$ is
virtually globally generated (see Corollary \ref{cor3}).

In Remark \ref{r-f} it is shown that Corollary \ref{cor3} fails in higher dimensions.

We prove the following (see Corollary \ref{cor2}):

\textit{Let $f\, :\, X\, \longrightarrow\, Y$ be a generically smooth morphism between two
irreducible smooth projective curves. Then $f$ is genuinely ramified if and only if
$((f_*{\mathcal O}_X)/{\mathcal O}_Y)^*$ is ample.}

It may be mentioned that the condition in Theorem \ref{thm2} and Corollary \ref{cor2}
that $f$ is generically smooth is essential. To give an example, take $Y$ to be a
smooth projective curve of genus at least two, and let $F_Y\, :\, Y\, \longrightarrow\, Y$
be the absolute Frobenius morphism of $Y$. Then
$(F_{Y*}{\mathcal O}_Y))/{\mathcal O}_Y$ is in fact ample.

\section{Genuinely ramified maps, direct image and ampleness}

The base field $k$ is assumed to be algebraically closed. For a vector bundle $E$ on an irreducible smooth projective curve
$X$, if
$$
E_1\, \subset\, \cdots\, \subset\, E_{n-1}\, \subset\, E_n\,=\, E
$$
is the Harder--Narasimhan filtration of $E$, then define $\mu_{\rm max}(E)\,:=\, \mu(E_1)$ and
$\mu_{\rm min}(E)\, =\, \mu(E/E_{n-1})$ \cite{HL}. The subbundle $E_1\, \subseteq\, E$ is called the maximal
semistable subsheaf of $E$.

Let $X$ and $Y$ be irreducible smooth projective curves and
\begin{equation}\label{f1}
f\, :\, X\, \longrightarrow\, Y
\end{equation}
a dominant generically smooth morphism. It is straight-forward to check that
\begin{equation}\label{e0}
\mu_{\rm max}(f_*{\mathcal O}_X)\, =\, 0.
\end{equation}
Indeed, $\mu_{\rm max}(f_*
{\mathcal O}_X)\, \leq\, 0$ because $\text{degree}({\mathcal O}_X)\,=\, 0$ \cite[p.~12824, Lemma 2.2]{BP3}. On the other
hand, we have ${\mathcal O}_Y\, \subset\, f_*{\mathcal O}_X$, which implies that $\mu_{\rm max}(f_*{\mathcal O}_X)\, \geq\,
0$, and thus \eqref{e0} holds.

The following proposition was proved in \cite{BP3}.

\begin{proposition}[{\cite[p.~12828, Proposition 2.6]{BP3} and \cite[p.~12830, Lemma 3.1]{BP3}}]\label{prop-gr}
The following five statements are equivalent:
\begin{enumerate}
\item The maximal semistable subsheaf of $f_*{\mathcal O}_X$ is ${\mathcal O}_Y$.

\item $\dim H^0(X,\, f^*f_* {\mathcal O}_X)\,=\, 1$.

\item The fiber product $X\times_Y X$ is connected.

\item The homomorphism of \'etale fundamental groups
$f_*\, :\, \pi^{\rm et}_1(X)\, \longrightarrow\,\pi^{\rm et}_1(Y)$
induced by $f$ is surjective.

\item The map $f$ does not factor through any nontrivial finite \'etale covering of $Y$.
\end{enumerate}
\end{proposition}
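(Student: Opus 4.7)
The plan is to prove the equivalences in three steps: (2) $\Leftrightarrow$ (3) and (4) $\Leftrightarrow$ (5) by standard arguments, (3) $\Leftrightarrow$ (5) by fiber-product analysis, and (1) $\Leftrightarrow$ (5) by identifying the maximal semistable subsheaf $M \subseteq f_{*}\mathcal{O}_X$ as the algebra of a maximal étale intermediate cover. For (2) $\Leftrightarrow$ (3): since $f$ is dominant between smooth curves it is flat, so flat base change yields $f^{*}f_{*}\mathcal{O}_X \cong p_{2*}\mathcal{O}_{X \times_Y X}$, whence $H^0(X, f^{*}f_{*}\mathcal{O}_X) \cong H^0(X \times_Y X, \mathcal{O}_{X \times_Y X})$; because $f$ is generically smooth, $X \times_Y X$ is generically reduced, and being Cohen--Macaulay over the smooth curve $X$ it is reduced everywhere, so $\dim H^0(\mathcal{O})$ equals the number of connected components. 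For (4) $\Leftrightarrow$ (5), the image of $f_{*}$ in $\pi^{\rm et}_1(Y)$ is an open subgroup corresponding under the Galois correspondence to a finite étale cover $Y' \to Y$ through which $f$ factors, and this image equals $\pi^{\rm et}_1(Y)$ iff $Y' = Y$.

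For (3) $\Leftrightarrow$ (5), the direction (5) $\Rightarrow$ (3) is immediate: if $f = g \circ h$ with $g \colon Y' \to Y$ a nontrivial étale cover, then $Y' \times_Y Y'$ splits as the diagonal plus a nonempty complement, and pulling back by $h$ disconnects $X \times_Y X$. The converse is the heart of the proof: starting from a decomposition $X \times_Y X = Z_1 \sqcup \cdots \sqcup Z_r$ with $r \geq 2$ and $Z_1 \supseteq \Delta_X$, one must produce a nontrivial étale intermediate cover of $Y$, for instance via the Stein factorization of the projection $p_2 \colon Z_1 \to Y$, or by constructing an $\mathcal{O}_Y$-subalgebra of $f_{*}\mathcal{O}_X$ whose relative spectrum is étale, as in the next paragraph.

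For (1) $\Leftrightarrow$ (5), set $V = f_{*}\mathcal{O}_X$ and let $M \subseteq V$ be the maximal semistable subsheaf; by \eqref{e0} we have $\mu(M) = 0$ and $\mathcal{O}_Y \subseteq M$. The key step is to show $M$ is an $\mathcal{O}_Y$-subalgebra of $V$: since $V/M$ has $\mu_{\rm max}(V/M) < 0$, any map from a sheaf of nonnegative $\mu_{\rm min}$ to $V/M$ vanishes, so applying this to the composition $M \otimes M \to V \to V/M$ (using $\mu_{\rm min}(M \otimes M) \geq 0$, clear in characteristic zero and requiring a Frobenius/strong-semistability argument in positive characteristic) forces the multiplication image to lie in $M$. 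Then $M$ defines via relative $\mathrm{Spec}$ an intermediate cover $X \to Y_M \to Y$, and the slope-zero condition combined with the algebra structure on $M$ yields that $Y_M \to Y$ is étale. Conversely, any étale intermediate cover $Y' \to Y$ produces $f_{*}\mathcal{O}_{Y'}$ as a semistable slope-$0$ subsheaf of $V$, hence contained in $M$; so $Y_M$ is the maximal such cover, and $M = \mathcal{O}_Y$ iff no nontrivial étale intermediate cover exists iff (5) holds.

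The main obstacle I expect is the converse (3) $\Rightarrow$ (5), which requires manufacturing an étale cover of $Y$ from the connectedness data of $X \times_Y X$ and demands careful handling of ramification along each non-diagonal component $Z_i$. A secondary positive-characteristic subtlety is ensuring $\mu_{\rm min}(M \otimes M) \geq 0$, since tensor products of semistable sheaves need not be semistable; this is typically handled by passing to the strongly semistable analogue of $M$ or via a Frobenius pullback argument.
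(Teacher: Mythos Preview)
The paper does not prove Proposition~\ref{prop-gr} at all; it is simply quoted from \cite{BP3} (Proposition 2.6 and Lemma 3.1 there), so there is no in-paper argument to compare against. That said, your outline is close in spirit to how \cite{BP3} proceeds, as one can infer from the way the present paper invokes it in the proof of Theorem~\ref{thm1}: the maximal semistable subsheaf $S^f\subset f_*{\mathcal O}_X$ is shown in \cite{BP3} to be an ${\mathcal O}_Y$-subalgebra, and ${\rm Spec}_Y(S^f)\to Y$ is the maximal \'etale subcover through which $f$ factors. So your route for (1)$\Leftrightarrow$(5) is the intended one.

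A few concrete issues with the proposal as written. First, the direction labels in your (3)$\Leftrightarrow$(5) paragraph are reversed: the argument ``if $f=g\circ h$ with $g$ a nontrivial \'etale cover then $X\times_Y X$ disconnects'' is $\lnot(5)\Rightarrow\lnot(3)$, i.e.\ it proves (3)$\Rightarrow$(5), not (5)$\Rightarrow$(3). Second, the sentence ``the slope-zero condition combined with the algebra structure on $M$ yields that $Y_M\to Y$ is \'etale'' hides a genuine step: you must first check that $Y_M={\rm Spec}_Y(M)$ is a smooth irreducible curve (integrality of $M$, normality inside $f_*{\mathcal O}_X$), and then a Riemann--Hurwitz computation gives $\deg(g_*{\mathcal O}_{Y_M})=-\tfrac12\deg(R)$ for the ramification divisor $R$, so $\deg M=0$ forces $R=0$. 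Third, the positive-characteristic obstacle you flag is real: for a semistable degree-zero $M$ one cannot conclude $\mu_{\min}(M\otimes M)\ge 0$ in general, so the map $M\otimes M\to V/M$ need not vanish for slope reasons alone. The fix in \cite{BP3} is not to argue via $M\otimes M$ directly but to use the specific algebra structure of $f_*{\mathcal O}_X$ (equivalently, for each local section $a\in M$ the ${\mathcal O}_Y$-linear endomorphism ``multiplication by $a$'' sends $M$ into $V$, and the induced map $M\to V/M$ vanishes because $M$ itself is semistable of slope $0$ while $\mu_{\max}(V/M)<0$; this avoids tensoring $M$ with itself).
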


Any morphism $f$ as in \eqref{f1} is called \textit{genuinely ramified}
if the (equivalent) statements in Proposition \ref{prop-gr} hold \cite[p.~12828, Definition 2.5]{BP3}.

\begin{proposition}\label{prop1}
Let $f\, :\, X\, \longrightarrow\, Y$ be a genuinely ramified morphism of smooth
projective curves. Then the vector bundle $((f_*{\mathcal O}_X)/{\mathcal O}_Y)^*$ is ample.
\end{proposition}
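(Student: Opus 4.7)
The plan is to translate the ampleness of $Q^* := ((f_*\mathcal{O}_X)/\mathcal{O}_Y)^*$ into a Harder--Narasimhan inequality, and then read off that inequality from the definition of genuine ramification together with \eqref{e0}.

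First I would invoke the standard criterion that on a smooth projective curve a vector bundle $E$ is ample if and only if $\mu_{\min}(E) > 0$. This follows from Hartshorne's characterization of ample bundles on curves (every quotient has positive degree), together with the fact that every non-zero quotient of $E$ has slope at least $\mu_{\min}(E)$; the criterion is valid in arbitrary characteristic. Dualizing, $Q^*$ is ample if and only if $\mu_{\max}(Q) < 0$, so it suffices to show $\mu_{\max}(Q) < 0$.

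Next I would analyze the Harder--Narasimhan filtration $0 = E_0 \subset E_1 \subset \cdots \subset E_n = f_*\mathcal{O}_X$. By \eqref{e0} we have $\mu(E_1) = \mu_{\max}(f_*\mathcal{O}_X) = 0$, and the inclusion $\mathcal{O}_Y \subset f_*\mathcal{O}_X$ realizes this slope. Because $f$ is genuinely ramified, Proposition \ref{prop-gr}(1) says that $\mathcal{O}_Y$ is the \emph{unique} maximal semistable subsheaf of $f_*\mathcal{O}_X$, forcing $E_1 = \mathcal{O}_Y$. By definition of the HN filtration, $E_2/E_1$ is semistable with $\mu(E_2/E_1) < \mu(E_1) = 0$, and the induced filtration $E_2/E_1 \subset \cdots \subset E_n/E_1 = Q$ is then the HN filtration of $Q$ (all successive quotients semistable with strictly decreasing negative slopes). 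Therefore $\mu_{\max}(Q) = \mu(E_2/E_1) < 0$, which combined with the first step yields that $Q^*$ is ample.

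The argument is short, and there is no serious obstacle: the only point that demands care is the ampleness criterion $\mu_{\min} > 0 \Leftrightarrow \text{ample}$ on curves, which must be cited in a characteristic-free form. Everything else is routine bookkeeping about HN filtrations, exploiting the defining property of genuine ramification to pin down $E_1$ exactly as $\mathcal{O}_Y$.
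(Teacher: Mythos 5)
Your argument is complete and agrees with the paper's proof in characteristic zero, but it has a genuine gap in positive characteristic, and the proposition is asserted over a field of arbitrary characteristic. The pivot of your proof is the claim that on a smooth projective curve $E$ is ample if and only if $\mu_{\min}(E)>0$, ``valid in arbitrary characteristic.'' That equivalence is Hartshorne's Theorem 2.4 of \cite{Ha2} and it is a characteristic-zero statement; the forward direction survives, but the converse fails in characteristic $p$. Hartshorne himself constructs, in Section 3 of \cite{Ha2}, vector bundles on curves every quotient of which has positive degree but which are not ample, and the paper explicitly flags this failure before embarking on the rest of its proof. The underlying reason is that in characteristic $p$ the Harder--Narasimhan filtration is not preserved by Frobenius pullback: a semistable bundle of positive slope can become unstable after pulling back by $F_Y$, acquiring a quotient of negative degree, and ampleness (which is insensitive to finite pullback only in the ``preserved'' direction) genuinely requires control of $\mu_{\min}((F^n_Y)^*E)$ for all $n$, not just $\mu_{\min}(E)$.

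Your Harder--Narasimhan bookkeeping itself is fine: genuine ramification pins down $E_1=\mathcal{O}_Y$, hence $\mu_{\max}(Q)=\mu(E_2/E_1)<0$, which is exactly the paper's inequality \eqref{e2} and settles characteristic zero. What is missing is the additional work needed to conclude in characteristic $p$. The paper does this by showing that $\mathcal{O}_Y$ remains the maximal semistable subsheaf of every tensor power $(f_*\mathcal{O}_X)^{\otimes i}$ (via the projection formula identification $f_*f^*V_{i-1}=V_{i-1}\otimes f_*\mathcal{O}_X$ and an induction using $\mu_{\max}(f_*B)\leq\mu_{\max}(B)/\deg(f)$), then using the natural inclusion $(F^n_Y)^*V_1\subset V_1^{\otimes np}$ to deduce $\mu_{\max}\bigl((F^n_Y)^*(V_1/\mathcal{O}_Y)\bigr)<0$ for \emph{all} $n\geq 1$, which is the hypothesis of the characteristic-$p$ ampleness criterion of \cite[Theorem 2.2]{Bi}. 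Without some substitute for this step (or at least an explicit restriction to characteristic zero), your proof does not establish the proposition as stated.
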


\begin{proof}
Since $f$ is genuinely ramified, from Proposition \ref{prop-gr} it follows that 
$$\mu_{\rm max}((f_*{\mathcal O}_X)/{\mathcal O}_Y)\, <\, 0,$$ and hence
we have
\begin{equation}\label{e2}
\mu_{\rm min}(((f_*{\mathcal O}_X)/{\mathcal O}_Y)^*)\,=\, - \mu_{\rm max}((f_*{\mathcal O}_X)/{\mathcal O}_Y)\, > \, 0\, .
\end{equation}

When the characteristic of $k$ is zero, a vector bundle $W$ on $Y$ is ample if and only if the degree
of every nonzero quotient of $W$ is positive \cite[p.~84, Theorem 2.4]{Ha2}. Therefore, from \eqref{e2}
we conclude that $((f_*{\mathcal O}_X)/{\mathcal O}_Y)^*$ is ample, when the characteristic of $k$ is zero. However,
this characterization of ample bundles fails when the characteristic of $k$ is positive (see
\cite[Section~3]{Ha2} for such examples).

We will inductive construct a sequence of vector bundles $\{V_i\}_{i \geq 0}$ on $Y$. First set
$V_0\, =\, {\mathcal O}_Y$. For any $i\, \geq\, i$, let $V_i\,=\, f_*f^*V_{i-1}$. Since we have
$${\mathcal O}_Y\, \subset\, V_1\,=\, f_*f^*{\mathcal O}_Y\,=\, f_*{\mathcal O}_X,$$ it can be
deduced that
\begin{equation}\label{f2}
{\mathcal O}_Y\, \subset\, V_i
\end{equation}
for all $i\, \geq\, 0$. Indeed, this follows inductively,
as the inclusion map ${\mathcal O}_Y\, \hookrightarrow\, V_j$ produces
$$
{\mathcal O}_Y \, \subset\, f_*{\mathcal O}_X \,=\, f_* f^*{\mathcal O}_Y \, \hookrightarrow\, f^*f_*V_j\,=\, V_{j+1}.
$$
This proves \eqref{f2} inductively.

Next we will show that the subsheaf ${\mathcal O}_Y$ in \eqref{f2} is the maximal semistable subsheaf of $V_i$.
This will also be proved using an inductive argument.

First, ${\mathcal O}_Y$ is obviously the maximal semistable subsheaf of $V_0$. Next, from Proposition
\ref{prop-gr} we know that ${\mathcal O}_Y$ is the maximal semistable subsheaf of $V_1$ (recall that $f$
is genuinely ramified). Let
$$
{\mathcal O}_Y \,=\, E^1_1\, \subset\, E^1_2\, \subset\, \cdots\, \subset\, E^1_{n_1-1}\, \subset\, E^1_{n_1}\,=\, V_1
$$
be the Harder--Narasimhan filtration of $V_1$. Since $f^*W$ is semistable if $W$ is so (see \cite[pp. 12823--12824,
Remark 2.1]{BP3}), we conclude that
\begin{equation}\label{e1}
{\mathcal O}_X \,=\, f^*E^1_1\, \subset\, \cdots\, \subset\, f^*E^1_{n_1-1}\, \subset\, f^*E^1_{n_1}\,=\, f^*V_1
\end{equation}
is the Harder--Narasimhan filtration of $f^*V_1$.

For any vector bundle $B$ on $X$, we have $\mu_{\rm max}(f_* B) \, \leq\, \mu_{\rm max}(B)/\text{degree}(f)$
\cite[Lemma 2.2, p.~12824]{BP3}. In view of the Harder--Narasimhan filtration in \eqref{e1}, this implies that
$$
\mu_{\rm max}((f_*f^*E^1_{j+1})/(f_*f^*E^1_{j})) \, <\, 0
$$
for all $1\,\leq\, j\, \, \leq\, n_1-1$, because $\mu_{\rm max}((f^*E^1_{j+1})/(f^*E^1_{j})) \, <\, 0$. Also, as noted before,
the maximal semistable subsheaf of $f_*{\mathcal O}_X$ is ${\mathcal O}_Y$. Combining these we conclude that
${\mathcal O}_Y$ is the maximal semistable subsheaf of $f_*f^*V_1\,=\, V_2$.

The above argument works inductively. To explain this, let
$$
{\mathcal O}_Y \,=\, E^\ell_1\, \subset\, E^\ell_2\, \subset\, \cdots\, \subset\, E^\ell_{n_\ell-1}\, \subset\, E^\ell_{n_\ell}
\,=\, V_\ell
$$
be the Harder--Narasimhan filtration of $V_\ell$. As before, we have
$$
\mu_{\rm max}((f_*f^*E^\ell_{j+1})/(f_*f^*E^\ell_{j})) \, <\, 0
$$
for all $1\,\leq\, j\, \, \leq\, n_\ell-1$, because $\mu_{\rm max}((f^*E^\ell_{j+1})/(f^*E^\ell_{j})) \, <\, 0$. Using this
together with the fact that the maximal semistable subsheaf of $f_*{\mathcal O}_X$ is ${\mathcal O}_Y$
we conclude that ${\mathcal O}_Y$ is the maximal semistable subsheaf of $f_*f^*V_\ell\,=\, V_{\ell+1}$.

The projection formula (see \cite[p.~124, Ch.~II, Ex.~5.1(d)]{Ha3}, \cite{Se}) gives that $V_{i+1} \,=\, f_*f^*V_i\,=\,
V_i\otimes (f_*{\mathcal O}_X)$ for all $i\, \geq\, 1$. This implies that
\begin{equation}\label{e3}
V_i\,=\, (f_*{\mathcal O}_X)^{\otimes i}\,=\, V^{\otimes i}_1
\end{equation}
for all $i\, \geq\, 1$.

Now we assume that the characteristic of $k$ is positive (recall that the proposition was 
proved when the characteristic of $k$ is zero). Let $p$ be the characteristic of $k$. Let 
$$F_Y\, :\, Y\, \longrightarrow\, Y$$ be the absolute Frobenius morphism of $Y$. For any 
vector bundle $W$ on $Y$, we have the inclusion
$$
F^*_Y W\, \subset\, W^{\otimes p}\, ;
$$
it is constructed using the map $W\, \longrightarrow\, W^{\otimes p}$ defined by $v\, \longmapsto\, v^{\otimes p}$.
Therefore, from \eqref{e3} we have
\begin{equation}\label{e4}
(F^n_Y)^* V_1\, \subset\, (V_1)^{\otimes np}\,=\, V_{np}
\end{equation}
for all $n\, \geq\, 1$. Since ${\mathcal O}_Y$ in \eqref{f2} is the maximal semistable subsheaf of $V_i$, from \eqref{e4} we have
$$
(F^n_Y)^* (V_1/{\mathcal O}_Y)\,=\, ((F^n_Y)^* V_1)/{\mathcal O}_Y
\, \subset\, V_{np}/{\mathcal O}_Y,
$$
and
\begin{equation}\label{e5}
\mu_{\rm max}((F^n_Y)^* (V_1/{\mathcal O}_Y)) \, <\, 0,
\end{equation}
because $\mu_{\rm max}(V_{np}/{\mathcal O}_Y) \, <\, 0$.

{}From \eqref{e5} it follows that
$$
\mu_{\rm min}((F^n_Y)^* (V_1/{\mathcal O}_Y)^*) \,=\, - \mu_{\rm max}((F^n_Y)^* (V_1/{\mathcal O}_Y)) \, > \, 0
$$
for all $n\, \geq\, 1$. This implies that $(V_1/{\mathcal O}_Y)^*\,=\, ((f_*{\mathcal O}_X)/{\mathcal O}_Y)^*$
is ample \cite[p.~542, Theorem 2.2]{Bi}.
\end{proof}

\section{Virtual global generation}\label{se3}

Let $E$ be a vector bundle on an irreducible smooth projective curve $Z$. It will be called \textit{virtually
globally generated} if there is a finite surjective morphism
$$
\phi\, :\,M\, \longrightarrow\, Z
$$
from an irreducible smooth projective curve $M$ such that $\phi^*E$ is generated by its global sections. The vector
bundle $E$ is called \textit{\'etale trivializable} if there is a pair $(M,\, \phi)$ as above such that $\phi$ is
\'etale and $\phi^*E$ is trivializable.

If $\text{degree}(E)\, <\, 0$, then $E$ is not virtually globally generated. More generally, $E$ is 
not virtually globally generated if it admits a quotient of negative degree. To give a nontrivial example
of vector bundle which is not virtually globally generated, let $Z$ be a compact connected Riemann surface of genus $g$,
with $g\, \geq\, 2$. Note that the free group of $g$ generators is a quotient of
$\pi_1(Z)$. To see this, express $\pi_1(Z)$ as the quotient of the free group, with generators $a_1,\, \cdots,\, a_g,\, b_1,\,
\cdots,\, b_g$, by the single relation $\prod_{i=1}^g [a_i,\, b_i]\,=\, 1$. Then the quotient of $\pi_1(Z)$ by the normal
subgroup generated by $b_1,\, \cdots,\, b_g$ is the free group generated by $a_1,\, \cdots,\, a_g$. Therefore, there is homomorphism
$$
\rho\, \, :\,\, \pi_1(Z)\,\, \longrightarrow\,\, \text{U}(r),
$$
where $\text{U}(r)$ is the group of $r\times r$ unitary matrices, such that $\rho(\pi_1(Z))$ is a dense subgroup of $\text{U}(r)$ (the
subgroup of $\text{U}(r)$ generated by two general elements of it is dense in ${\rm U}(r)$). Let $E$ denote the flat unitary vector bundle
on $Z$ given by $\rho$. This vector bundle $E$ is stable of degree zero \cite{NS}. Let $M$ be a compact connected Riemann surface and 
$$
\phi\, :\,M\, \longrightarrow\, Z
$$
a surjective holomorphic map. Since the image of the induced homomorphism
$$
\phi_*\,\, :\,\,\pi_1(M)\,\, \longrightarrow\,\, \pi_1(Z)
$$
is a subgroup of $\pi_1(Z)$ of finite index, the image of the following composition of homomorphisms
$$
\pi_1(M)\, \,\stackrel{\phi_*}{\longrightarrow}\, \,\pi_1(Z) \,\, \stackrel{\rho}{\longrightarrow}\,\, \text{U}(r)
$$
is a dense subgroup of $\text{U}(r)$. This implies that $\phi^*E$ is a stable vector bundle of degree zero
\cite{NS}. In particular, we have
$$
H^0(M,\, \phi^*E)\,=\, 0.
$$
Hence $E$ is not virtually globally generated.

\begin{theorem}\label{thm1}
Let $X$ and $Y$ be irreducible smooth projective curves over $k$ and
$$
f\, :\, X\, \longrightarrow\, Y
$$
a generically smooth morphism. Then $f_*{\mathcal O}_X$ fits in a short exact sequence of vector bundles on $Y$
$$
0\, \longrightarrow\, E \, \longrightarrow\, f_*{\mathcal O}_X \, 
\longrightarrow\, V \, \longrightarrow\, 0\, ,
$$
where $E$ is \'etale trivializable and $V^*$ is ample.
\end{theorem}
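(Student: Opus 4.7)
I would reduce Theorem \ref{thm1} to the genuinely ramified case (Proposition \ref{prop1}) via the maximal étale factorization of $f$. Since any factorization $X \to Y' \to Y$ with $Y' \to Y$ finite étale has $\deg(Y' \to Y) \leq \deg f$, such factorizations are bounded in degree; pick one of maximal degree, say $X \stackrel{f'}{\longrightarrow} Y' \stackrel{g}{\longrightarrow} Y$. Maximality together with Proposition \ref{prop-gr}(5) forces $f'$ to be genuinely ramified: if it factored through a nontrivial étale cover of $Y'$, composing with $g$ would yield a strictly larger étale factor of $f$.

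Applying Proposition \ref{prop1} to $f'$ produces a short exact sequence $0 \to \mathcal{O}_{Y'} \to f'_*\mathcal{O}_X \to V' \to 0$ with $V'^*$ ample on $Y'$. The map $g$ is finite, so $g_*$ is exact on coherent sheaves; pushing this sequence forward (and using $g_*f'_* = f_*$) gives
$$
0 \,\longrightarrow\, g_*\mathcal{O}_{Y'} \,\longrightarrow\, f_*\mathcal{O}_X \,\longrightarrow\, g_*V' \,\longrightarrow\, 0.
$$
I set $E := g_*\mathcal{O}_{Y'}$ and $V := g_*V'$; it remains to show that $E$ is étale trivializable and $V^*$ is ample.

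Both are verified after pullback by the Galois closure $\tilde{g}\colon \tilde{Y} \to Y$ of $g$, which factors as $\tilde{Y} \stackrel{q}{\longrightarrow} Y' \stackrel{g}{\longrightarrow} Y$ with $\tilde{g}$ Galois étale of group $G$ and $q$ étale corresponding to a subgroup $H \leq G$. Standard Galois theory decomposes the fiber product as $\tilde{Y} \times_Y Y' \cong \bigsqcup_{\sigma \in G/H} \tilde{Y}_\sigma$, a disjoint union of $[G:H]$ copies of $\tilde{Y}$ on which the first projection is an isomorphism and the second projection restricts to $q \circ \sigma$. Flat base change then produces
$$
\tilde{g}^*E \,\cong\, \mathcal{O}_{\tilde{Y}}^{[G:H]}, \qquad \tilde{g}^*V \,\cong\, \bigoplus_{\sigma \in G/H} \sigma^* q^* V'.
$$
The first isomorphism shows $E$ is étale trivializable. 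Dualizing the second yields $\tilde{g}^*V^* \cong \bigoplus_{\sigma} \sigma^* q^*(V'^*)$; since $V'^*$ is ample on $Y'$ and pullback along the finite surjective maps $q$ and $\sigma$ preserves ampleness, each summand—and hence the direct sum—is ample. Ampleness descends along the finite surjective map $\tilde{g}$, so $V^*$ is ample on $Y$.

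The main obstacle is the flat base change identification $\tilde{g}^* g_*(-) \cong \bigoplus_{\sigma} \sigma^* q^*(-)$ via the disjoint decomposition of $\tilde{Y} \times_Y Y'$; this is routine Galois theory but requires careful bookkeeping of which projection induces which pullback, and the final descent of ampleness along the finite surjective $\tilde{g}$ relies on a classical result that holds in arbitrary characteristic.
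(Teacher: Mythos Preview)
Your proof is correct and follows the same strategy as the paper: factor $f$ through its maximal \'etale intermediate cover $g\colon Y'\to Y$, apply Proposition~\ref{prop1} to the genuinely ramified map $f'\colon X\to Y'$, push the resulting sequence forward by $g_*$, and verify the two properties after passing to the Galois closure of $g$. The only differences are in execution---the paper obtains the \'etale factorization by identifying the maximal semistable subbundle $S^f\subset f_*\mathcal{O}_X$ with $g_*\mathcal{O}_{Y'}$ (citing \cite{BP3}) rather than by your direct maximal-degree argument, and it checks \'etale triviality of $E$ via the observation that a degree-zero subbundle of a trivial bundle is trivial rather than by your explicit base-change decomposition.
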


\begin{proof}
Let
\begin{equation}\label{g1}
S^f\, \subset\,f_*{\mathcal O}_X
\end{equation}
be the maximal semistable subbundle. From \eqref{e0} we know that $\text{degree}(S^f)\,=\, 0$.

The algebra structure of ${\mathcal O}_X$ produces an algebra structure on the direct image $f_*{\mathcal O}_X$.
The subsheaf $S^f$ in \eqref{g1} is a subalgebra. Moreover, there is an \'etale covering $g\, :\, Z\, \longrightarrow\, Y$
such that
\begin{itemize}
\item $f$ factors through $g$, meaning there is a morphism
\begin{equation}\label{g3}
h\, :\, X\, \longrightarrow\, Z
\end{equation}
such that $g\circ h\,=\, f$, and

\item the subsheaf $g_*{\mathcal O}_Z \, \subset\,f_*{\mathcal O}_X$ coincides with $S^f$.
\end{itemize}
(See the proof of \cite[p.~12828, Proposition 2.6]{BP3} and \cite[p.~12829, (2.13)]{BP3}.) Moreover, the map $h$ in \eqref{g3}
is genuinely ramified \cite[p.~12829, Corollary 2.7]{BP3}.

Consider the short exact sequence of vector bundles on $Y$
\begin{equation}\label{g4}
0\, \longrightarrow\, S^f \, \longrightarrow\, f_*{\mathcal O}_X \, \longrightarrow\, Q\,:=\,(f_*{\mathcal O}_X)/S^f
\, \longrightarrow\, 0\, .
\end{equation}
The pullback $g^*Q$, where $Q$ is the vector bundle in \eqref{g4}, is identified with $(h_*{\mathcal O}_X)/{\mathcal O}_Z$,
where $h$ is the map in \eqref{g3}. From Proposition \ref{prop1} we know that $((h_*{\mathcal O}_X)/{\mathcal O}_Z)^*$ is
ample, Since $((h_*{\mathcal O}_X)/{\mathcal O}_Z)^*\,=\, g^*Q^*$, this implies that $Q^*$ in \eqref{g4} is ample
(see \cite[p.~73, Proposition 4.3]{Ha1}).

Since $Q^*$ is ample, in view of \eqref{g4}, it suffices to prove that $S^f$ is a finite vector bundle.

Fix an \'etale Galois covering $\varphi\, :\, M\, \longrightarrow\, Y$ that dominates $g$. In other words, there is a morphism
$$
\beta \, :\, M\, \longrightarrow\, Z
$$
such that $g\circ\beta\,=\, \varphi$. Since $\varphi$ is an \'etale Galois covering, the vector bundle
$\varphi^*\varphi_* {\mathcal O}_M$ is trivializable. On the other hand,
$$
S^f\,=\, g_*{\mathcal O}_Z\, \subset\, \varphi_*{\mathcal O}_M ,
$$
and $S^f$ is a subbundle of $\varphi_*{\mathcal O}_M$. Consider the subbundle
\begin{equation}\label{g5}
\varphi^*S^f\, \subset\, \varphi^*\varphi_* {\mathcal O}_M\, .
\end{equation}
We have $\text{degree}(\varphi^*S^f)\,=\, 0$, because $\text{degree}(S^f)\,=\, 0$, and we also know that
$\varphi^*\varphi_* {\mathcal O}_M$ is trivializable. Consequently, the subbundle $\varphi^*S^f$ in \eqref{g5}
is also trivializable. Hence $S^f$ is \'etale trivializable.
\end{proof}

\begin{corollary}\label{cor2}
Let $f\, :\, X\, \longrightarrow\, Y$ be a generically smooth morphism between two
irreducible smooth projective curves. Then $f$ is genuinely ramified if and only if
$((f_*{\mathcal O}_X)/{\mathcal O}_Y)^*$ is ample.
\end{corollary}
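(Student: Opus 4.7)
\textbf{Proof plan for Corollary \ref{cor2}.} One direction is already done: Proposition \ref{prop1} says that if $f$ is genuinely ramified, then $((f_*{\mathcal O}_X)/{\mathcal O}_Y)^*$ is ample. So the plan is to establish the converse, namely that ampleness of $((f_*{\mathcal O}_X)/{\mathcal O}_Y)^*$ forces $f$ to be genuinely ramified. I will argue by contradiction.

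Suppose $f$ is not genuinely ramified. Then by Proposition \ref{prop-gr} the maximal semistable subbundle $S^f \subset f_*{\mathcal O}_X$ strictly contains ${\mathcal O}_Y$. The construction recalled in the proof of Theorem \ref{thm1} provides an \'etale covering $g\,:\,Z\,\longrightarrow\, Y$ through which $f$ factors, with $S^f\,=\, g_*{\mathcal O}_Z$ as subsheaves of $f_*{\mathcal O}_X$. Because $S^f\, \supsetneq\, {\mathcal O}_Y$, the \'etale map $g$ is nontrivial, so $\deg(g)\, \geq\, 2$.

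Next I would dualize. The chain of subbundles ${\mathcal O}_Y\, \subset\, g_*{\mathcal O}_Z\, \subset\, f_*{\mathcal O}_X$ (all genuine subbundles, with torsion-free quotient $Q$ as in \eqref{g4}) yields a subbundle inclusion $(g_*{\mathcal O}_Z)/{\mathcal O}_Y\, \hookrightarrow\, (f_*{\mathcal O}_X)/{\mathcal O}_Y$ whose cokernel is the vector bundle $Q$. Dualizing produces a surjection of vector bundles
$$
((f_*{\mathcal O}_X)/{\mathcal O}_Y)^*\, \twoheadrightarrow\, ((g_*{\mathcal O}_Z)/{\mathcal O}_Y)^*.
$$
Since a quotient of an ample bundle is ample, the assumed ampleness of the left-hand side forces $((g_*{\mathcal O}_Z)/{\mathcal O}_Y)^*$ to be ample as well. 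But because $g$ is \'etale, Riemann--Hurwitz gives $\text{degree}(g_*{\mathcal O}_Z)\,=\, 0$; hence $(g_*{\mathcal O}_Z)/{\mathcal O}_Y$ has degree $0$ and its dual also has degree $0$, while its rank $\deg(g)-1\, \geq\, 1$ is positive. An ample vector bundle on a smooth projective curve must have positive degree, a contradiction.

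The proof is therefore mostly assembling existing pieces: the hard structural input is Theorem \ref{thm1} (which packages the \'etale factorization $f\,=\, g\circ h$ together with the identification $S^f\,=\, g_*{\mathcal O}_Z$), and after that the argument is a short dualization-and-degree check. The only point requiring care is ensuring that $(g_*{\mathcal O}_Z)/{\mathcal O}_Y$ is a genuine subbundle of $(f_*{\mathcal O}_X)/{\mathcal O}_Y$, which is automatic because $S^f$ is the \emph{maximal semistable subbundle} of $f_*{\mathcal O}_X$, so the cokernel is already torsion-free.
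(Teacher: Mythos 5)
Your proof is correct and follows essentially the same route as the paper: exhibit $(S^f/{\mathcal O}_Y)^*$ as a degree-zero, positive-rank quotient of $((f_*{\mathcal O}_X)/{\mathcal O}_Y)^*$, which is incompatible with ampleness. The only cosmetic difference is that you obtain $\mathrm{degree}(S^f)=0$ via the identification $S^f = g_*{\mathcal O}_Z$ for the \'etale factor $g$, whereas the paper reads it off directly from $\mu_{\rm max}(f_*{\mathcal O}_X)=0$ as in \eqref{e0}.
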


\begin{proof}
In view of Proposition \ref{prop1} it suffices to show that $((f_*{\mathcal O}_X)/
{\mathcal O}_Y)^*$ is not ample if $f$ is not genuinely ramified. If
$f$ is not genuinely ramified, then $\text{rank}(S^f)\, \geq\, 2$ (see \eqref{g1}).
Hence $(S^f/{\mathcal O}_Y)^*$ is a quotient of $((f_*{\mathcal O}_X)/{\mathcal O}_Y)^*$
(see \eqref{g4}). But $\text{degree}((S^f/{\mathcal O}_Y)^*)\,=\, 0$ because
$\text{degree}((S^f)\,=\, 0$. Now $((f_*{\mathcal O}_X)/
{\mathcal O}_Y)^*$ is not ample because its quotient $(S^f/{\mathcal O}_Y)^*$
is not ample.
\end{proof}

\begin{theorem}\label{thm2}
Let $X$ and $Y$ be irreducible smooth projective curves and
$$
f\, :\, X\, \longrightarrow\, Y
$$
a generically smooth morphism. Then $(f_*{\mathcal O}_X)^*$ is virtually globally generated.
\end{theorem}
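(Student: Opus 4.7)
The strategy is to invoke Theorem \ref{thm1} and then produce, after an \'etale cover followed by a further positive cover, a finite surjection that globally generates $(f_*{\mathcal O}_X)^*$.

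First, apply Theorem \ref{thm1} to obtain a short exact sequence of vector bundles
$$
0 \,\longrightarrow\, E \,\longrightarrow\, f_*{\mathcal O}_X \,\longrightarrow\, V \,\longrightarrow\, 0
$$
on $Y$ with $E$ \'etale trivializable and $V^*$ ample. Dualizing gives
$$
0 \,\longrightarrow\, V^* \,\longrightarrow\, (f_*{\mathcal O}_X)^* \,\longrightarrow\, E^* \,\longrightarrow\, 0,
$$
where $V^*$ is still ample and $E^*$ is still \'etale trivializable (pullback commutes with dualization, and the dual of a trivial bundle is trivial). Choose an \'etale cover $\varphi\,:\,M\,\to\,Y$ for which $\varphi^*E^*\,\cong\,{\mathcal O}_M^{\oplus r}$, and pull back along $\varphi$ to get
$$
0 \,\longrightarrow\, \varphi^*V^* \,\longrightarrow\, \varphi^*(f_*{\mathcal O}_X)^* \,\longrightarrow\, {\mathcal O}_M^{\oplus r} \,\longrightarrow\, 0,
$$
in which $\varphi^*V^*$ remains ample since $\varphi$ is finite.

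Next I would seek a further finite surjective morphism $\psi\,:\,N\,\to\,M$ from a smooth projective curve $N$ such that $\psi^*\varphi^*V^*$ is globally generated and $H^1(N,\,\psi^*\varphi^*V^*)\,=\,0$. Assuming such $\psi$, pulling back the above sequence to $N$ and taking the long exact sequence of cohomology forces the connecting map $H^0(N,\,{\mathcal O}_N^{\oplus r})\,\to\,H^1(N,\,\psi^*\varphi^*V^*)$ to vanish, so that
$$
H^0(N,\, \psi^*\varphi^*(f_*{\mathcal O}_X)^*)\,\twoheadrightarrow\, H^0(N,\,{\mathcal O}_N^{\oplus r})\,=\,k^r.
$$
A straightforward fiberwise check, combining this surjection with global generation of $\psi^*\varphi^*V^*$ and triviality of the quotient ${\mathcal O}_N^{\oplus r}$, then shows that $\psi^*\varphi^*(f_*{\mathcal O}_X)^*$ is globally generated on $N$. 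The composite $\psi\circ\varphi\,:\,N\,\to\,Y$ thus witnesses virtual global generation of $(f_*{\mathcal O}_X)^*$.

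The main obstacle is the construction of $\psi$ with both required properties. In positive characteristic $p$ this is clean: take $\psi$ to be a sufficiently high iterate of the absolute Frobenius $F_M$, which fixes the genus of $M$ while multiplying $\mu_{\rm min}(\varphi^*V^*)\,>\,0$ by $p^n$; for $n$ large, $\mu_{\rm min}$ of the pullback exceeds $2g_M$, which gives both global generation (via a Butler-type slope bound) and $H^1$-vanishing (via Serre duality applied to the dual). In characteristic zero, with no Frobenius available, one must instead invoke (or establish) the fact that an ample vector bundle on a smooth projective curve becomes globally generated, with vanishing $H^1$, after pullback by a suitable ramified finite cover; producing such a cover uniformly is the step I expect to require the most care.
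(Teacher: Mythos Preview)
Your overall architecture (dualize the sequence from Theorem~\ref{thm1}, trivialize $E^*$ by an \'etale cover, then lift global sections through the extension using $H^1$-vanishing and the fiberwise five-lemma argument) is coherent, but the construction of $\psi$ has a genuine gap in both characteristics, and your assessment of which case is harder is inverted.

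In positive characteristic your claim that iterated Frobenius ``multiplies $\mu_{\rm min}(\varphi^*V^*)$ by $p^n$'' is false in general: Frobenius pullback does not preserve semistability, so the Harder--Narasimhan filtration of $(F_M^n)^*(\varphi^*V^*)$ can be strictly finer than the pullback of the original one, and $\mu_{\rm min}$ can drop below $p^n\mu_{\rm min}(\varphi^*V^*)$. This is precisely the phenomenon that makes ampleness subtle in characteristic $p$ (cf.\ the examples in \cite[Section~3]{Ha2}). Salvaging your argument requires an extra input such as Langer's eventual strong semistability, or equivalently the machinery behind \cite{Bi} and \cite{BP1}; it is not ``clean''. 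Conversely, in characteristic zero your slope approach cannot succeed as stated: under any finite cover $\psi$ the quantity $\mu_{\rm min}(\psi^*\varphi^*V^*)$ and the threshold $2g_N-2$ both scale by $\deg(\psi)$ (Riemann--Hurwitz), so you cannot force the former past the latter by choosing $\psi$.

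The paper avoids the $H^1$-vanishing problem entirely by \emph{splitting} the exact sequence. In characteristic zero the inclusion ${\mathcal O}_Z\hookrightarrow h_*{\mathcal O}_X$ is split by the normalized trace, and pushing forward by $g$ gives $f_*{\mathcal O}_X\cong S^f\oplus Q$ on the nose; so $(f_*{\mathcal O}_X)^*\cong (S^f)^*\oplus Q^*$, and one simply quotes that ample bundles on curves are virtually globally generated \cite[Theorem~3.6]{BP2}. In positive characteristic the paper invokes \cite[Proposition~2.1]{BP1}, which says the maximal semistable subsheaf splits off after sufficiently many Frobenius pullbacks, yielding $(F_Y^n)^*(f_*{\mathcal O}_X)^*\cong (F_Y^n)^*(S^f)^*\oplus (F_Y^n)^*Q^*$; each summand is then handled separately. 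Thus the paper never needs your simultaneous global-generation-plus-$H^1$-vanishing cover, and characteristic zero is in fact the easy case.
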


\begin{proof}
First assume that the characteristic of $k$ is zero. We will show that the short exact sequence
in \eqref{g4} splits. First, the inclusion map ${\mathcal O}_Z \, \hookrightarrow\, h_*{\mathcal O}_X$
splits naturally, where $h$ is the map in \eqref{g3}; in other words,
$$
h_*{\mathcal O}_X\,=\, {\mathcal O}_Z\oplus F\, ;
$$
the fiber of $F$ over any $z\, \in\, Z$ is the space of functions on $h^{-1}(z)$ whose sum is zero. Now
we have
\begin{equation}\label{g6}
f_*{\mathcal O}_X\,=\, g_*h_*{\mathcal O}_X\,=\, g_*({\mathcal O}_Z\oplus F)\,=\, 
(g_*{\mathcal O}_Z)\oplus g_* F\,=\, S^f\oplus g_*F\, .
\end{equation}
{}From \eqref{g4} and \eqref{g6} it follows that the vector bundle $g_*F$ is isomorphic to $Q$. Therefore, from \eqref{g6}
we have
\begin{equation}\label{g7}
(f_*{\mathcal O}_X)^*\,=\, (S^f)^*\oplus Q^*\,.
\end{equation}

Now $(S^f)^*$ is virtually globally generated because $S^f$ is \'etale trivializable, and $Q^*$ is
virtually globally generated because $Q^*$ is ample by Theorem \ref{thm1} (see \cite[p.~46, Theorem 3.6]{BP2}).
Therefore, from \eqref{g7} it follows that $(f_*{\mathcal O}_X)^*$ is virtually globally generated.

Next assume that the characteristic of $k$ is positive. As before, $$F_Y\, :\, Y\, \longrightarrow\, Y$$ is the absolute
Frobenius morphism of $Y$. Consider the exact sequence in \eqref{g4}; recall that $S^f$ is the maximal semistable subsheaf of
$f_*{\mathcal O}_X$. Therefore, there is an integer $n_0$ such that for all $n\, \geq\, n_0$, we have
$$
(F^n_Y)^*f_*{\mathcal O}_X\,=\, (F^n_Y)^* S^f\oplus (F^n_Y)^* Q
$$
\cite[p.~356, Proposition 2.1]{BP1}. Therefore,
\begin{equation}\label{g8}
(F^n_Y)^*(f_*{\mathcal O}_X)^*\,=\, (F^n_Y)^* (S^f)^*\oplus (F^n_Y)^* Q^*\, .
\end{equation}
Now $(F^n_Y)^* (S^f)^*$ is virtually globally generated because $S^f$ is \'etale trivializable and the Frobenius
morphism commutes with \'etale morphisms. Also, $Q^*$ is virtually globally generated because $Q^*$ is ample by Theorem
\ref{thm1} (see \cite[p.~357, Theorem 2.2]{BP1}). Therefore, from \eqref{g8} it follows that
$(f_*{\mathcal O}_X)^*$ is virtually globally generated.
\end{proof}

\begin{corollary}\label{cor1}
Let $X$ and $Y$ be irreducible smooth projective curves and
$$
f\, :\, X\, \longrightarrow\, Y
$$
a generically smooth morphism. Then the following statements hold:
\begin{itemize}
\item If the characteristic of $k$ is zero, then 
$$
(f_*{\mathcal O}_X)^*\,=\, E\oplus A\, ,
$$
where $E$ is \'etale trivializable and $A$ is ample.

\item If the characteristic of $k$ is positive, then there is an integer $n$ such that
$$
(F^n_Y)^*(f_*{\mathcal O}_X)^*\,=\, E\oplus A\, ,
$$
where $E$ is \'etale trivializable and $A$ is ample.
\end{itemize}
\end{corollary}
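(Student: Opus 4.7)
The plan is to harvest the decomposition directly from the proof of Theorem~\ref{thm2}, where both summands already appear in the course of establishing virtual global generation; the corollary is really a repackaging statement.

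In the characteristic zero case, I would read off the splitting \eqref{g7},
$$
(f_*{\mathcal O}_X)^* \,=\, (S^f)^* \oplus Q^*,
$$
and simply set $E\,:=\, (S^f)^*$ and $A\,:=\, Q^*$. Theorem~\ref{thm1} delivers both required properties: $Q^*$ is ample, and $S^f$ is \'etale trivializable. The latter property transfers to the dual: if $\phi\, :\, M \, \longrightarrow\, Y$ is a finite surjective \'etale map from an irreducible smooth projective curve with $\phi^* S^f$ trivializable, then $\phi^* E\,=\, (\phi^* S^f)^*$ is also trivializable.

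In positive characteristic, I would invoke the splitting \eqref{g8} valid for every $n \,\geq\, n_0$,
$$
(F^n_Y)^*(f_*{\mathcal O}_X)^* \,=\, (F^n_Y)^*(S^f)^* \oplus (F^n_Y)^* Q^*,
$$
fix any such $n$, and set $E\,:=\, (F^n_Y)^*(S^f)^*$ and $A\,:=\, (F^n_Y)^* Q^*$. Ampleness of $A$ is immediate from the ampleness of $Q^*$ established in Theorem~\ref{thm1}, since pullback by a finite surjective morphism preserves ampleness. For the \'etale trivializability of $E$, I would use naturality of the absolute Frobenius with respect to any morphism: if $\phi\, :\, M \, \longrightarrow\, Y$ is an \'etale cover trivializing $S^f$, then $F^n_Y \circ \phi\,=\, \phi \circ F^n_M$, so
$$
\phi^* E \,=\, (F^n_Y \circ \phi)^*(S^f)^* \,=\, (F^n_M)^* \phi^*(S^f)^*,
$$
which is trivializable because $\phi^*(S^f)^*$ is, and $\phi$ is an \'etale cover of $Y$ by an irreducible smooth projective curve.

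Essentially no new obstacle arises: the two non-trivial inputs, namely the Frobenius-pullback splitting invoked in \eqref{g8} via \cite[p.~356, Proposition 2.1]{BP1} and the structural Theorem~\ref{thm1}, are already in place by the time Theorem~\ref{thm2} is proved. The only point one needs to note explicitly is that taking duals and pulling back by the absolute Frobenius both preserve \'etale trivializability, which is handled by the one-line computations above.
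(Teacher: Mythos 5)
Your proof is correct, but it takes a genuinely different route from the paper. The paper's own proof is a one-line deduction: having established in Theorem \ref{thm2} that $(f_*{\mathcal O}_X)^*$ is virtually globally generated, it invokes the general structure theorem \cite[p.~40, Theorem 1.1]{BP2}, which characterizes virtually globally generated bundles on a curve as precisely those that (after a suitable Frobenius pullback, in positive characteristic) split as a direct sum of an \'etale trivializable bundle and an ample bundle. You instead bypass that black box and exhibit the decomposition explicitly, taking $E=(S^f)^*$ and $A=Q^*$ from \eqref{g7} in characteristic zero, and their Frobenius pullbacks from \eqref{g8} in positive characteristic; the supporting facts you need --- that duals and Frobenius pullbacks of \'etale trivializable bundles are \'etale trivializable (via the naturality $\phi\circ F_M=F_Y\circ\phi$ of the absolute Frobenius), and that pullback by a finite surjective morphism preserves ampleness --- are standard and correctly handled, and indeed the commutation of Frobenius with \'etale maps is already used in the paper's proof of Theorem \ref{thm2}. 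What the paper's route buys is brevity, at the cost of relying on the full strength of the characterization in \cite{BP2}; what your route buys is an explicit identification of the summands ($E$ comes from $g_*{\mathcal O}_Z$ for the maximal \'etale subcover and $A$ is the dual of the quotient $Q$), and it depends only on Theorem \ref{thm1} together with the splitting result \cite[Proposition 2.1]{BP1} already invoked in the proof of Theorem \ref{thm2}, rather than on \cite[Theorem 1.1]{BP2}.
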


\begin{proof}
In view of Theorem \ref{thm2}, this follows immediately from \cite[p.~40, Theorem 1.1]{BP2}.
\end{proof}

\begin{corollary}\label{cor3}
Let $X$ and $Y$ be irreducible smooth projective curves and
$$
f\, :\, X\, \longrightarrow\, Y
$$
a generically smooth morphism. Then $((f_*{\mathcal O}_X)/{\mathcal O}_Y)^*$ is
virtually globally generated.
\end{corollary}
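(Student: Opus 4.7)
The plan is to propagate the direct sum decomposition of $(f_*\mathcal{O}_X)^*$ used in the proof of Theorem \ref{thm2} to its quotient by $\mathcal{O}_Y$. The key preliminary point is that $\mathcal{O}_Y \subset S^f$, where $S^f$ denotes the maximal semistable subsheaf of $f_*\mathcal{O}_X$: since $\mathcal{O}_Y$ is semistable of slope $0$ and $\mu_{\max}(f_*\mathcal{O}_X) = 0$ by \eqref{e0}, the slope-$0$ subsheaf $\mathcal{O}_Y$ is forced to lie inside the maximal semistable subsheaf $S^f$.

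Assume first that $\mathrm{char}(k)=0$. The proof of Theorem \ref{thm2} furnishes a splitting $f_*\mathcal{O}_X \,=\, S^f \oplus Q$, with $Q \,=\, (f_*\mathcal{O}_X)/S^f$. Quotienting by $\mathcal{O}_Y \subset S^f$ and dualizing produces
\[
((f_*\mathcal{O}_X)/\mathcal{O}_Y)^* \,=\, (S^f/\mathcal{O}_Y)^* \oplus Q^*.
\]
The summand $Q^*$ is ample by Theorem \ref{thm1}, and hence virtually globally generated by \cite[p.~46, Theorem 3.6]{BP2}. For the other summand, I would exploit that $S^f$ is \'etale trivializable (Theorem \ref{thm1}): choose an \'etale cover $\varphi\,:\,M \to Y$ with $\varphi^*S^f \cong \mathcal{O}_M^{\oplus r}$; then the pulled-back inclusion $\mathcal{O}_M \hookrightarrow \mathcal{O}_M^{\oplus r}$ is determined by a nowhere-vanishing constant vector in $k^r$, which, after a change of basis in the trivial bundle, becomes the inclusion of the first coordinate. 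The quotient $\varphi^*(S^f/\mathcal{O}_Y) \cong \mathcal{O}_M^{\oplus (r-1)}$ is trivial, so $(S^f/\mathcal{O}_Y)^*$ is \'etale trivializable, and in particular virtually globally generated.

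In positive characteristic, the same argument applies after pulling back by a sufficient power of the absolute Frobenius: the proof of Theorem \ref{thm2} provides $n_0$ so that $(F^n_Y)^*f_*\mathcal{O}_X \,=\, (F^n_Y)^* S^f \oplus (F^n_Y)^* Q$ for all $n \geq n_0$. Functoriality of pullback shows that the inclusion $\mathcal{O}_Y \subset (F^n_Y)^* S^f$ is simply the Frobenius pullback of $\mathcal{O}_Y \subset S^f$, and sits inside the first direct summand. The recipe above then yields
\[
(F^n_Y)^*((f_*\mathcal{O}_X)/\mathcal{O}_Y)^* \,=\, ((F^n_Y)^*S^f/\mathcal{O}_Y)^* \oplus (F^n_Y)^*Q^*,
\]
which is virtually globally generated: the first summand is \'etale trivializable (base-change $\varphi$ along $F^n_Y$), and the second is a Frobenius pullback of an ample bundle, virtually globally generated by \cite[p.~357, Theorem 2.2]{BP1}. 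Virtual global generation then descends along the finite surjective morphism $F^n_Y$ by composing cover data.

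The step I expect to be the most delicate is the \'etale trivializability of $S^f/\mathcal{O}_Y$: one must verify that the pulled-back inclusion $\mathcal{O}_M \hookrightarrow \varphi^*S^f \cong \mathcal{O}_M^{\oplus r}$ splits off as a direct summand. This hinges on the observation that any nowhere-vanishing global section of a trivial bundle on a connected projective curve is a non-zero constant vector in $k^r$, which extends to a trivialization basis. Everything else is routine bookkeeping around the decompositions already established in the proof of Theorem \ref{thm2}.
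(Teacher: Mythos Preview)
Your proof is correct but takes a different route from the paper. The paper derives Corollary~\ref{cor3} directly from the \emph{statement} of Theorem~\ref{thm2}, without revisiting the $S^f \oplus Q$ decomposition: given a cover $\phi\colon M \to Y$ for which $\phi^*(f_*\mathcal{O}_X)^*$ is globally generated, the exact sequence
\[
0 \longrightarrow \phi^*((f_*\mathcal{O}_X)/\mathcal{O}_Y)^* \longrightarrow \phi^*(f_*\mathcal{O}_X)^* \longrightarrow \mathcal{O}_M \longrightarrow 0
\]
splits because global generation of the middle term produces a section mapping to $1 \in H^0(M,\mathcal{O}_M)$; the left term is then a direct summand of a globally generated bundle, hence itself globally generated. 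This argument is characteristic-free and uses nothing beyond Theorem~\ref{thm2} as a black box. Your approach, by contrast, reopens the proof of Theorem~\ref{thm2} and tracks the inclusion $\mathcal{O}_Y \subset S^f$ through the decomposition, handling characteristic zero and positive separately. It yields the finer conclusion that $((f_*\mathcal{O}_X)/\mathcal{O}_Y)^*$ (or a Frobenius pullback of it) itself splits as \'etale-trivializable $\oplus$ ample, at the cost of more bookkeeping and reliance on the internals of the earlier proofs.
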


\begin{proof}
{}From Theorem \ref{thm2} we know that there is a finite surjective map
$$
\phi\, :\,M\, \longrightarrow\, Y
$$
such that $\phi^*(f_*{\mathcal O}_X)^*$ is generated by its global sections. We have
the short exact sequence of vector bundles on $M$
\begin{equation}\label{z1}
0\, \longrightarrow\, \phi^*((f_*{\mathcal O}_X)/{\mathcal O}_Y)^* \, \longrightarrow\,
\phi^*(f_*{\mathcal O}_X)^* \, \longrightarrow\, \phi^*({\mathcal O}_Y)^*\,=\,
{\mathcal O}_M \, \longrightarrow\, 0\, .
\end{equation}
Since $\phi^*(f_*{\mathcal O}_X)^*$ is generated by its global sections, it has a section
that projects to a nonzero section of ${\mathcal O}_M$. Choosing such a section we obtain
a splitting of \eqref{z1}. Since $\phi^*(f_*{\mathcal O}_X)^*$ is generated by
its global sections, its direct summand $\phi^*((f_*{\mathcal O}_X)/{\mathcal O}_Y)^*$
is also generated by its global sections.
\end{proof}

\begin{remark}\label{r-f}
Corollary \ref{cor3} is not valid in higher dimensions. To give an example, let $X$ denote
${\mathbb C}{\mathbb P}^2$ blown up at the point $(1,\, 0,\, 0)$. The involution of 
${\mathbb C}{\mathbb P}^2$ defined by $(x,\, y,\, z)\, \longmapsto\, (x,\,-y,\, -z)$ lifts
to $X$; let
$$
\tau\, :\, X\, \longrightarrow\, X
$$
be this lifted involution. Set $Y\,:=\, X/({\mathbb Z}/2{\mathbb Z})$ to be the quotient of $X$
for the action of ${\mathbb Z}/2{\mathbb Z}$ given by $\tau$. Let
$$
f\, :\, X\, \longrightarrow\, X/({\mathbb Z}/2{\mathbb Z})\,=\, Y
$$
be the quotient map. Then the line bundle $((f_*{\mathcal O}_X)/{\mathcal O}_Y)^*$ is not virtually globally
generated. To see this, first note that the line bundle $f^*(((f_*{\mathcal O}_X)/{\mathcal O}_Y)^*)$ is
virtually globally generated if $((f_*{\mathcal O}_X)/{\mathcal O}_Y)^*$ is virtually globally generated. But
$$f^*(((f_*{\mathcal O}_X)/{\mathcal O}_Y)^*)\, =\, {\mathcal O}_{X}(D_e+D_\infty)\, ,$$
where $D_e\, \subset\, X$ is the exceptional divisor and $D_\infty\, \subset\, X$ is the inverse image of
$$
\{(0,\, y,\, z)\,\in\, {\mathbb C}{\mathbb P}^2\,\, \mid\,\, y,\, z\, \in\, {\mathbb C}\}\, \subset\,
{\mathbb C}{\mathbb P}^2\, .
$$
It is easy to see that ${\mathcal O}_{X}(D_e+D_\infty)$ is not virtually globally generated. Indeed, if
$$\varpi\, :\, Z\, \longrightarrow\, X$$ is a finite surjective proper map, then every section of
$\varpi^*{\mathcal O}_{X}(D_e+D_\infty)$ vanishes on $\varpi^{-1}(D_e)$.
\end{remark}

\section*{Acknowledgements}

The first author is partially supported by a J. C. Bose Fellowship (JBR/2023/000003).

\medskip
\noindent
\textbf{Competing interests:}\, The authors declare none.

\end{document}